\newtheorem{theorem}{Theorem}[section]
\newtheorem{example}{Example}[section]
\newtheorem{lemma}{Lemma}[section]
\newenvironment{proof}{\textbf{Proof.}}{\qquad $\Box$ \bigskip }
\newcommand{\npmatrix}[1]{\left( \begin{matrix} #1 \end{matrix} \right)}
\begin{document}
\begin{frontmatter}
\title{Integer completely positive matrices of order two}

\author[TJL]{Thomas Laffey}
\ead{thomas.laffey@ucd.ie}

\author[he]{Helena \v Smigoc\corref{cor1}}
\ead{helena.smigoc@ucd.ie}
\address[TJL, he]{School of Mathematics and Statistics, University College Dublin, Belfield, Dublin 4, Ireland}
\cortext[cor1]{Corresponding author}

\begin{abstract}
We show that every integer doubly nonnegative $2 \times 2$ matrix has an integer cp-factorization. 
\end{abstract}

\begin{keyword}
completely positive matrices \sep doubly nonnegative matrices \sep integer matrices. 
	
\MSC[2010] 15B36 \sep 15B48.
\end{keyword}

\end{frontmatter}

\section{Introduction}

A $n\times n$ matrix $A$ is said to be \emph{completely positive}, if there exists a (not necessarily square) nonnegative matrix $V$ such that $A=VV^T$. Completely positive matrices of order $n$ form a cone, with the dual cone of copositive matrices, i.e. the matrices with $x^TAx \geq 0$ for all $x$ with nonnegative elements \cite{MR0147484}. Completely positive and copositive matrices have been widely studied, and they play an important role in various applications. However, several basic questions about them are still open. For background, we refer the reader to the following works and citations therein \cite{MR1986666,MR3414584,MR2892529, MR2845851}.

Clearly, any completely positive matrix is nonnegative and positive semidefinite. We call the family of matrices that are both nonnegative and positive semidefinite \emph{doubly nonnegative}. Doubly nonnegative matrices of
order less than $5$ are completely positive \cite{MR0174570}. However, this is no longer true for matrices of oder larger than or equal to $6$ \cite{MR0111694}.

Any $n \times n$ completely positive matrix $A$ has many \emph{cp-factorizations} of the form $A=VV^T,$ where $V$ is an $n \times m$ matrix. Note that $m$ is also not unique. We define the \emph{cp-rank} of $A$ to be the minimal possible $m$. If we demand that $V$ has rational entries, then we say that $A$ has \emph{a rational cp-factorization}. We define the \emph{rational cp-rank} correspondingly. In this note we will study \emph{integer cp-factorizations}, where we demand $V$ to be an integer nonnegative matrix. 

The question, if any rational cp-matrix has a rational cp-factorization is open. However, it is known that every rational matrix which lies in the interior of the cone of completely positive matrices has a rational cp-factorization \cite{MR3624664}. On the other hand, not every $n\times n$ integer completely positive matrix has an integer cp-factorization. In this note we answer a question posed in \cite{Berman2}, by proving that for $n=2$ every integer doubly nonnegative matrix has an integer cp-factorization.

%\newpage

\section{Main Result}
  
First we consider two basic cases: rank one matrices, and matrices with one of the diagonal elements equal to $1$. The main result will be proved by an inductive argument, using those two lemmas as the base of induction. 

\begin{lemma}\label{lem:rank1}
An integer doubly nonnegative matrix of rank $1$ has an integer cp-factorization, and an integer cp-rank less than or equal to $4$. 
\end{lemma}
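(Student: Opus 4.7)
The plan is to parameterise all $2\times 2$ integer doubly nonnegative matrices of rank $1$, and then decompose the single ``scalar'' parameter via Lagrange's four-square theorem.

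First I would write the matrix as $A=\begin{pmatrix} a & c \\ c & b \end{pmatrix}$ with $a,b,c$ nonnegative integers. The rank-one condition gives $\det A = ab-c^2 = 0$, so $c^2=ab$. The degenerate case $a=0$ (or $b=0$) forces $c=0$ by positive semidefiniteness, reducing $A$ to a diagonal matrix whose only nonzero entry can be written as a sum of at most four integer squares, immediately giving a cp-factorization of cp-rank $\leq 4$.

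Assuming $a,b>0$, I would extract $d=\gcd(a,b)$, write $a=d\alpha$, $b=d\beta$ with $\gcd(\alpha,\beta)=1$, and observe that $c^2=d^2\alpha\beta$ forces $\alpha\beta$ to be a perfect square. Coprimality of $\alpha$ and $\beta$ then forces each of them individually to be a perfect square, so $\alpha=p^2$, $\beta=q^2$ for some nonnegative integers $p,q$, and consequently
\[
 a=dp^2,\qquad b=dq^2,\qquad c=dpq.
\]
This is the key structural step: an integer rank-one doubly nonnegative $2\times 2$ matrix is exactly $d\,\binom{p}{q}\binom{p}{q}^{T}$ for nonnegative integers $d,p,q$.

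Now I would apply Lagrange's four-square theorem to write $d=s_1^2+s_2^2+s_3^2+s_4^2$ with $s_i\in\mathbb{Z}_{\geq 0}$, and set $V$ to be the $2\times 4$ nonnegative integer matrix whose $i$-th column is $\binom{s_ip}{s_iq}$. Then
\[
 VV^{T}=\sum_{i=1}^{4}\binom{s_ip}{s_iq}\bigl(s_ip\ \ s_iq\bigr)=d\binom{p}{q}\bigl(p\ \ q\bigr)=A,
\]
which exhibits an integer cp-factorization of cp-rank at most $4$.

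The only conceptual obstacle is the parameterisation step, where one needs coprimality plus unique factorisation to conclude that $\alpha$ and $\beta$ are separately perfect squares; everything after that is a direct application of the four-square theorem. I do not expect bound $4$ to be improvable in general, since $d$ may be of the form $4^k(8m+7)$, which provably requires four squares.
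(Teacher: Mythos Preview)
Your proof is correct and follows essentially the same route as the paper: reduce to a scalar multiple of a rank-one integer outer product by extracting a gcd and using coprimality to force the remaining factors to be perfect squares, then apply Lagrange's four-square theorem to the scalar. The only cosmetic difference is that you take $d=\gcd(a,b)$ of the diagonal entries while the paper takes the gcd of all three entries; since $d^2\mid c^2$ implies $d\mid c$, these coincide and the arguments are the same.
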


\begin{proof}
Let $$A=\npmatrix{a & b \\ b& c},$$
where $a,b,c$ are nonnegative integers and $ac=b^2$. In particular, when $b=0$, $A$ has only one nonzero element on the diagonal. In this case the claim follows from Lagrange's four-square theorem, that states that every natural number can be represented as the sum of four squares of integers. 

From now on we assume that $b\neq 0$. Let $d$ denote the greatest common divisor of $a, b, c$: $$a=da_0,\,  b=db_0, c=dc_0,$$ where the greatest common divisor of $a_0, b_0$ and $c_0$ is equal to $1$. Since $A$ has rank $1$, we have $a_0c_0=b_0^2$. Now, if $d'$ divides both $a_0$ and $c_0$, then it has to divide $b_0$ as well. Hence the greatest common divisor of $a_0$ and $c_0$ is $1$. We deduce that $a_0=a_1^2$, $c_0=c_1^2$ and $b_0=a_1c_1$ for some nonnegative integers $a_1$ and $c_1$. Finally, we use Lagrange's four-square theorem to write $d=d_1^2+d_2^2+d_3^2+d_4^2$. Now  
$$ A=d\npmatrix{a_1^2 & a_1c_1 \\ a_1c_1 & c_1^2}
=\sum_{i=1}^4 \npmatrix{d_i a_1 \\ d_i c_1} \npmatrix{d_i a_1 & d_i c_1},$$
illustrates an integer cp-factorization of $A$.  
\end{proof}

\begin{lemma}\label{lem:diag1}
An integer doubly nonnegative matrix with a diagonal element equal to $1$ has an integer cp-factorization. %, and an integer cp-rank less than or equal to $4$. 
\end{lemma}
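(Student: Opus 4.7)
The plan is to reduce the problem directly to Lagrange's four-square theorem, in much the same spirit as the $b=0$ case of Lemma \ref{lem:rank1}. Write the matrix (after transposing if necessary, so that the $(1,1)$ entry is the diagonal entry equal to $1$) as
$$A=\npmatrix{1 & b \\ b & c},$$
where $b,c$ are nonnegative integers and the doubly nonnegative condition forces $\det A = c-b^2 \geq 0$.

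The key observation is that a diagonal entry equal to $1$ lets us split off a rank-one piece whose cp-factorization is already manifestly integer. First I would write
$$A=\npmatrix{1 \\ b}\npmatrix{1 & b}+\npmatrix{0 & 0 \\ 0 & c-b^2},$$
which is a valid identity because the subtracted rank-one matrix has $(1,1)$ entry $1$, $(1,2)$ and $(2,1)$ entries $b$, and $(2,2)$ entry $b^2$. Since $c-b^2$ is a nonnegative integer, Lagrange's four-square theorem provides nonnegative integers $d_1,d_2,d_3,d_4$ with $c-b^2=d_1^2+d_2^2+d_3^2+d_4^2$. Then
$$A=\npmatrix{1 \\ b}\npmatrix{1 & b}+\sum_{i=1}^{4}\npmatrix{0 \\ d_i}\npmatrix{0 & d_i}$$
is an integer cp-factorization, establishing the lemma (and incidentally showing that the cp-rank is at most $5$).

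There is no real obstacle here; the content of the proof is entirely Lagrange's theorem together with the fortunate algebraic identity that makes the $(2,2)$ residue after subtracting $vv^T$ nonnegative. The only thing to check carefully is that the hypothesis of integrality is preserved at every step, which it is because $b$ and $c-b^2$ are integers whenever $b$ and $c$ are. One could be more economical if $c-b^2$ happens to be a sum of fewer squares, but no such refinement is needed for the statement as given.
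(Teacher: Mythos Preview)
Your proof is correct and follows essentially the same approach as the paper: the same rank-one splitting $A=\npmatrix{1\\b}\npmatrix{1&b}+\npmatrix{0&0\\0&c-b^2}$, with the residual handled by Lagrange's four-square theorem (the paper phrases this last step as an appeal to Lemma~\ref{lem:rank1}, whose diagonal case is exactly Lagrange). One small wording quibble: ``transposing'' a symmetric matrix does nothing; you mean a simultaneous row/column permutation to place the $1$ in position $(1,1)$.
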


\begin{proof}
The case when $A$ has rank one has already been dealt with, so we assume that
 $$A=\npmatrix{1 & b \\ b & c}$$
is an integer nonnegative matrix with $c>b^2.$ We can write:
 $$A=\npmatrix{1 \\ b}\npmatrix{1 & b}+\npmatrix{0 & 0 \\ 0 & c-b^2},$$
 and the proof is complete by Lemma \ref{lem:rank1}.  %From now on we will assume that $1<a\leq b \leq c$.
\end{proof}

\begin{theorem}
An integer doubly nonnegative $2 \times 2$ matrix has an integer cp-factorization.  
\end{theorem}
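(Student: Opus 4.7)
\medskip

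\noindent\textbf{Proof plan.} I plan to prove the theorem by strong induction on $a$, assuming without loss of generality that $a \le c$. The base cases are: $a=0$ (which forces $b=0$ by doubly--nonneg\-ativity, and then Lagrange's four-square theorem applied to $c$ completes the factorization), $a=1$ (Lemma~\ref{lem:diag1}), and matrices of rank one (Lemma~\ref{lem:rank1}). Hence we may assume $a \ge 2$ and $d := ac - b^2 \ge 1$. In the inductive step I seek a nonnegative integer vector $v = \binom{e}{f}$ such that $A - vv^T$ is again integer doubly nonnegative with strictly smaller first diagonal entry $a - e^2 < a$; the inductive hypothesis then completes the factorization.

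The constraints on $v$ are $e^2 \le a$, $f^2 \le c$, $ef \le b$ (so that $A - vv^T$ has nonnegative entries), together with the positive-semidefinite inequality $(a-e^2)(c-f^2) \ge (b-ef)^2$. The latter is equivalent to $v^T \mathrm{adj}(A)\,v \le d$, i.e.\ $c e^2 - 2 b e f + a f^2 \le d$; using the identity $c(ce^2 - 2bef + af^2) = (ce - bf)^2 + d f^2$, this may also be written as $(ce - bf)^2 \le d(c - f^2)$. Two clean sub-cases dispose of many matrices immediately. First, when $c \ge b^2$ the choice $v = \binom{1}{b}$ leaves the diagonal remainder $\mathrm{diag}(a-1,\, c-b^2)$, each of whose entries is factorized using Lagrange's theorem. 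Second, writing $b = aq + r$ with $|r| \le a/2$ and taking $v = \binom{1}{q}$, the PSD condition simplifies (using that $c$ integer forces $a \mid r^2 + d$, and hence $c - aq^2 - 2qr = (r^2+d)/a$) to $(a-1) d \ge r^2$, which succeeds whenever $d$ is not too small relative to $a$.

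The main obstacle is the residual regime $c < b^2$ in which $d$ is so small that $(a-1)d \ge r^2$ fails for every integer $q$: for instance, $A = \npmatrix{13 & 18 \\ 18 & 25}$ has $d = 1$ and $r = 5$ for the nearest integer $q = 1$, giving $(a-1)d = 12 < 25 = r^2$. In such cases one must allow $v = \binom{e}{f}$ with $e \ge 2$, taken, e.g., from a convergent of the continued-fraction expansion of $b/c$ (equivalently of the principal-eigenvector direction of $A$): the convergent property bounds $|ce - bf|$ and hence keeps $v^T \mathrm{adj}(A)\, v$ small (in the example, $v = \binom{2}{3}$ or $v = \binom{3}{4}$ each produces a rank-one remainder handled by Lemma~\ref{lem:rank1}). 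When $d \ge 2$, Minkowski's theorem applied to the ellipse $v^T \mathrm{adj}(A)\, v \le d$ (of area $\pi \sqrt d > 4$) guarantees a nonzero lattice point, and when $d = 1$ the adjugate form has discriminant $-4$, is $SL_2(\mathbb{Z})$-equivalent to $x^2 + y^2$, and so represents $1$ explicitly. The bounds $e^2 \le a$ and $f^2 \le c$ are automatic for nonzero $v$ in the ellipse when $d > 0$; the trickiest verification is $ef \le b$, but the maximum of $ef$ on the ellipse works out to $(\sqrt{ac} + b)/2$, which only barely exceeds $b$, so integer points typically satisfy $ef \le b$, and replacing $v$ by $(|e|,|f|)$ further reduces the adjugate-form value without increasing $ef$. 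This last verification is where I expect the paper's proof to supply a tailored short argument.
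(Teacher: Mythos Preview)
Your proposal leaves a genuine gap exactly where you flag it: you have not shown that some nonzero lattice point in the ellipse $v^{T}\mathrm{adj}(A)\,v\le d$ (produced by Minkowski for $d\ge 2$, or by reduction of the discriminant-$(-4)$ form for $d=1$) also satisfies $ef\le b$. Your remark that passing to $(|e|,|f|)$ ``reduces the adjugate-form value without increasing $ef$'' is not correct as written---when $ef<0$, taking absolute values does decrease the form value but \emph{increases} $ef$ to $|ef|>0$---and in any case the maximum of $ef$ over the ellipse is $(\sqrt{ac}+b)/2>b$, so there is a genuine sliver of the first-quadrant part of the ellipse (between its boundary and the hyperbola $ef=b$, which meet at $(\sqrt a,\,b/\sqrt a)$ and $(b/\sqrt c,\,\sqrt c)$) that could in principle trap the only available lattice points. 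You say you expect the paper to supply a tailored argument for this step; it does not.

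The paper bypasses the lattice-point search altogether with a Euclidean-type reduction. After first subtracting diagonal rank-one terms until $(a-1)c<b^{2}$ and $a(c-1)<b^{2}$ (which forces $a\le b\le c$), write $b=qa+r$ and $r^{2}=\alpha a+\gamma$ with $0\le r,\gamma<a$. These two inequalities then pin down $c=q^{2}a+2qr+\alpha+1$ exactly, so that
\[
A \;=\; P\,B\,P^{T},\qquad P=\npmatrix{1&0\\ q&1},\quad B=\npmatrix{a&r\\ r&\alpha+1},
\]
with $\det B=a(\alpha+1)-r^{2}=\det A$. Because $P$ is entrywise nonnegative, any rank-one reduction $R'=\binom{k}{t'}(k\ \ t')$ with $B-R'$ doubly nonnegative lifts to $R=PR'P^{T}=\binom{k}{qk+t'}(k\ \ qk+t')$, and $A-R=P(B-R')P^{T}$ is again doubly nonnegative with $\det(A-R)=\det(B-R')$. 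Since $r<a\le b$ and $\alpha+1\le a\le c$, the matrix $B$ is strictly smaller in the sum of its entries, and induction on $a+b+c$ terminates when a diagonal entry reaches $1$ (Lemma~\ref{lem:diag1}). This is precisely your continued-fraction intuition made algorithmic---the passage $A\mapsto B$ is one step of the Euclidean algorithm on $(b,a)$---but now the off-diagonal nonnegativity $b-k(qk+t')=q(a-k^{2})+(r-kt')\ge 0$ is an immediate consequence of the nonnegativity of $B-R'$, and no ellipse geometry is needed.
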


\begin{proof}
As the case when $A$ has rank $1$ is dealt with in Lemma \ref{lem:rank1}, we may assume that
$$A=\npmatrix{a & b \\ b& c},$$
where $a,b,c$ are positive integers, and $ac>b^2$.

Lemma \ref{lem:rank1} and Lemma \ref{lem:diag1} allow us to use induction, for example on $a+b+c$. Hence, we need to show that, unless $A$ has rank one, there exists a rank one matrix $$R=\npmatrix{k \\ t}\npmatrix{k & t},$$
with $k,t$ nonnegative integers, such that $A-R$ is doubly nonnegative. If such a matrix $R$ exists we will say that $A$ \emph{can be reduced}. Note that $A$ can be reduced, if there exist nonnegative integers $k$ and $t$ such that $k^2\leq a$, $t^2 \leq c$, $kt \leq b$ and $\det(A-R)\geq 0$.
% In fact,  it is sufficient to satisfy inequalities $k^2\leq a$, $kt \leq b$, and $\det(A-R)\geq 0$.

If $$A_1=A-\npmatrix{1 \\ 0 } \npmatrix{1 & 0}$$ is doubly nonnegative, or equivalently, if $(a-1)c\geq b^2$, then the problem reduces to the cp-factorisation of $A_1$. This allows us to assume $(a-1)c <b^2$, and, by a similar argument,  $a(c-1)<b^2$. In particular, we may assume without loss of generalisation that $a\leq b \leq c$.

Let us write $$b=q a+r\text{ and }r^2=\alpha a+ \gamma,$$ where $r, \gamma \in \{0,1,\ldots, a-1\}$, so:
\begin{equation}\label{eq:b^2}
b^2=a(q^2a+2qr+\alpha)+\gamma.
\end{equation} 
We claim that $A$ can be reduced, if 
 $$B=\npmatrix{a & r \\ r & \alpha +1}$$ 
 can be reduced. More precisely, assuming that $B-R'$ is doubly nonnegative for some integer rank one nonnegative matrix $R'$:
 $$R'=\npmatrix{k \\ t'}\npmatrix{k & t'}.$$
 we will prove that $A-R$ is doubly nonnegative for
 $$R=\npmatrix{k \\ qk+t'}\npmatrix{k & qk+t'}.$$ Proving this claim finishes the proof of the theorem, since $B$ has all but one element smaller than $A$, and we have shown earlier, that the statement is true, if one of the diagonal elements is equal to $1$.

Inequalities $ac>b^2>(c-1)a$ together with (\ref{eq:b^2}) imply 
$$q^2a+2qr+\alpha+\frac{\gamma}{a}+1>c>q^2a+2qr+\alpha+\frac{\gamma}{a},$$
and, since $c$ is a positive integer, this gives us 
$$c= q^2a+2qr+\alpha+1.$$
Now the determinant of $A$ is equal to: 
 \begin{align}\label{ineq:det}
 ac-b^2 = a(q^2a+2qr+\alpha+1)-(qa+r)^2=a(\alpha+1)-r^2,
 \end{align}
 and hence equal to the determinant of $B$.

Assuming that $B-R'$ is nonnegative, $A-R$ is nonnegative since  
\begin{align}\label{eq:bkt}
b -k(qk+t')&=aq+r-k(qk+t') \\
                &=q(a-k^2)+(r-k t') \geq 0
\end{align}
and 
\begin{align}\label{eq:ckt}
c-(qk-t')^2&=q^2a+2qr+\alpha+1-(qk-t')^2 \\
  &=q^2(a-k^2)+2q(r-kt')+(\alpha+1-t'^2) \geq 0.
\end{align}
Finally, using (\ref{eq:bkt}) and (\ref{eq:ckt}) we compute
\begin{align*}
\det(A-R)&=(a-k^2)(q^2(a-k^2)+2q(r-kt')+(\alpha+1-t'^2))-(q(a-k^2)+(r-k t'))^2 \\
               &=(a-k^2)(\alpha +1-t'^2)-(r-kt')^2=\det(B-R'), 
\end{align*} 
proving that $\det(A-R)=\det(B-R')\geq 0$.
\end{proof}

The proof of the theorem gives us an algorithmic way to find an integer cp-factorization for a $2 \times 2$ doubly nonnegative matrix. We follow this algorithm on a concrete example below.  

\begin{example}
Let us consider the matrix: $$A_0=\npmatrix{78 & 200 \\ 200 & 4000}.$$ First we reduce the larger diagonal element of $A_0$ as much as we can while still preserving a positive determinant:
$A_0=A_1+D_1,$ where  
$$A_1=\npmatrix{78 & 200 \\ 200 & 513} \text{ and }D_1=\npmatrix{0 & 0 \\ 0 & 3487}.$$
The matrix $D_1$ is covered by Lemma \ref{lem:rank1}. We continue working with $A_1$. 
We write $200=2 \cdot 78+44$, and $44^2=24\cdot 78+64,$ so the matrix corresponding to the matrix $B$ from the proof is equal to 
 $$A_2=\npmatrix{78 & 44 \\ 44 & 25}.$$
 Note that we cannot decrease $78$ on the diagonal while preserving a positive determinant. Now $25$ is the smaller element on the diagonal. We write $44=25+19$ and $19^2=14\cdot 25+11$. Following the proof, the problem reduces to the matrix
  $$A_3=\npmatrix{15 & 19 \\ 19 & 25 }.$$
 The diagonal element $25$ cannot be reduced, so we write $19=15+4$ and $4^2=15+1.$ This gives us 
  $$A_4=\npmatrix{15 & 4 \\ 4 & 2}.$$
  At this point $15$ can be reduced to $8$:
  $$A_4=\npmatrix{8 & 4 \\ 4 & 2}+\npmatrix{7 & 0 \\ 0 & 0}.$$
  The matrix that we are left with has rank one:
\begin{align*}
A_5&=\npmatrix{8 & 4 \\ 4 & 2}=2\npmatrix{4 & 2 \\ 2 & 1} \\
&=\npmatrix{2 \\ 1 }\npmatrix{2 & 1}+\npmatrix{2 \\ 1 }\npmatrix{2 & 1}. 
\end{align*}
 We take 
 $$R_4=R_5=\npmatrix{2 \\ 1 }\npmatrix{2 & 1},$$
noting that $A_5-R_5$ and $A_4-R_4$ are doubly nonnegative. Following the proof of the theorem, we take $k=2$, $t'=1$, and $q=1$ to construct:
 $$R_3=\npmatrix{2 \\ 3 }\npmatrix{2 & 3},$$
 with $A_3-R_3$ doubly nonnegative. To get $R_2$, we first note that the smaller diagonal element of $A_2$ is in $(2,2)$ position. We take $k=3,$ $t'=2$ and $q=1$ to get
  $$R_2=\npmatrix{5 \\ 3 }\npmatrix{5 & 3}$$ 
  with $A_2-R_2$ doubly nonnegative. Finally, to get $R_1$ we take $k=5$, $t'=3$ and $q=2$:
   $$R_1=\npmatrix{5 \\ 13 }\npmatrix{5 & 13}.$$
  Now we need to repeat the process for  $A_1-R_1,$ which is by construction doubly nonnegative. 

If we follow the algorithm to completion, we get the following decomposition of $A_0$ that contains $10$ terms:
\begin{align*}
A_0&=\left(
\begin{array}{c}
 0 \\
 59 \\
\end{array}
\right)\left(
\begin{array}{cc}
 0 & 59 \\
\end{array}
\right)+\left(
\begin{array}{c}
 0 \\
 2 \\
\end{array}
\right)\left(
\begin{array}{cc}
 0 & 2 \\
\end{array}
\right)+\left(
\begin{array}{c}
 0 \\
 1 \\
\end{array}
\right)\left(
\begin{array}{cc}
 0 & 1 \\
\end{array}
\right)+\left(
\begin{array}{c}
 0 \\
 1 \\
\end{array}
\right)\left(
\begin{array}{cc}
 0 & 1 \\
\end{array}
\right) \\
&+\left(
\begin{array}{c}
 2 \\
 5 \\
\end{array}
\right)\left(
\begin{array}{cc}
 2 & 5 \\
\end{array}
\right)+\left(
\begin{array}{c}
 2 \\
 5 \\
\end{array}
\right)\left(
\begin{array}{cc}
 2 & 5 \\
\end{array}
\right)+\left(
\begin{array}{c}
 2 \\
 5 \\
\end{array}
\right)\left(
\begin{array}{cc}
 2 & 5 \\
\end{array}
\right)\\
&+\left(
\begin{array}{c}
 4 \\
 10 \\
\end{array}
\right)\left(
\begin{array}{cc}
 4 & 10 \\
\end{array}
\right)+\left(
\begin{array}{c}
 5 \\
 13 \\
\end{array}
\right)\left(
\begin{array}{cc}
 5 & 13 \\
\end{array}
\right)+\left(
\begin{array}{c}
 5 \\
 13 \\
\end{array}
\right)\left(
\begin{array}{cc}
 5 & 13 \\
\end{array}
\right)
\end{align*}

On the other hand, an ad hoc decomposition with only $8$ terms can also be obtained: 
\begin{align*}
A_0&=\left(
\begin{array}{c}
 8 \\
 25 \\
\end{array}
\right)\left(
\begin{array}{cc}
 8 & 25 \\
\end{array}
\right)+\left(
\begin{array}{c}
 0 \\
 58 \\
\end{array}
\right)\left(
\begin{array}{cc}
 0 & 58 \\
\end{array}
\right)+\left(
\begin{array}{c}
 0 \\
 3 \\
\end{array}
\right)\left(
\begin{array}{cc}
 0 & 3 \\
\end{array}
\right)+\left(
\begin{array}{c}
 0 \\
 1 \\
\end{array}
\right)\left(
\begin{array}{cc}
 0 & 1 \\
\end{array}
\right)\\ 
&+\left(
\begin{array}{c}
 0 \\
 1 \\
\end{array}
\right)\left(
\begin{array}{cc}
 0 & 1 \\
\end{array}
\right)+\left(
\begin{array}{c}
 3 \\
 0 \\
\end{array}
\right)\left(
\begin{array}{cc}
 3 & 0 \\
\end{array}
\right)+\left(
\begin{array}{c}
 2 \\
 0 \\
\end{array}
\right)\left(
\begin{array}{cc}
 2 & 0 \\
\end{array}
\right)+\left(
\begin{array}{c}
 1 \\
 0 \\
\end{array}
\right)\left(
\begin{array}{cc}
 1 & 0 \\
\end{array}
\right)
\end{align*}
This shows that the algorithm we presented does not produce a decomposition with the smallest possible cp-rank. 
%and 
% $$A_5=\npmatrix{2 \\ 1 }\npmatrix{2 & 1}+\npmatrix{2 \\ 1 }\npmatrix{2 & 1}.$$
% Indeed, it is easy to check that  
% $$A_5-\npmatrix{2 \\ 1}\npmatrix{2 & 1}\text{ and }A_4-\npmatrix{2 \\ 1}\npmatrix{2 & 1}$$
%are doubly nonnegative. Furthermore, taking $k=2$ for $A_3$ we see that 
%  $$A_3-\npmatrix{2 \\ t}\npmatrix{2 & t}$$
%  is doubly nonnegative for $t=2,3$. This means that we can take $k=2$ or $k=3$ for $A_2$:
%     $$A_2-\npmatrix{t \\ k}\npmatrix{t & k}$$
%  is doubly nonnegative for $k=2$ and $t=3,4$, and for $k=3$ and $t=5$. Finally, we consider
%       $$A_1-\npmatrix{k \\ t}\npmatrix{k & t}$$
%for $k=3,4,5$, and see that it is doubly nonnegative for $k=3$ and $t=8$, $k=4$ and $t=10$, and also for $k=5$ and $t=13$. 
 \end{example}

\section*{References}

%\bibliographystyle{elsarticle-num} 
%\bibliography{../Bib/CP}

\end{document}